\documentclass{amsart}
\usepackage[utf8]{inputenc}
\usepackage{amssymb,amsmath,amsthm,latexsym}
\usepackage{enumerate}
\usepackage[margin=1in]{geometry}
\usepackage{xcolor}

%Theorem Environments
\newtheorem{theorem}{Theorem}[section]
\newtheorem{lemma}[theorem]{Lemma}

\theoremstyle{definition}
\newtheorem{prb}[theorem]{Problem}
\newtheorem{rmk}[theorem]{Remark}

\newcommand{\norm}[1]{\left\lVert#1\right\rVert}
\newcommand{\bilin}[1]{\langle#1\rangle}
\newcommand{\RR}{{\mathbb{R}}}

\newcommand{\boldx}{{\boldsymbol x}}
\newcommand{\boldDelta}{{\boldsymbol \Delta}}

%%%%%%%%%%%%%%%%%%%%%%%%%%%%%%%%%%%%%%%%%%%%%%%%%%%%%%%%%%%%%%
\begin{document}

\title[Recovering Gaussian Mixtures from Distance Distributions]{The Shape of a Gaussian Mixture is Characterized by the Probability Density of the Distance between two Samples } 

\author{Mireille Boutin} 
\address{ Mireille~Boutin\\
  Purdue University\\
  Elmore Family School of ECE and  Dept.\ of Mathematics\\
  465 Northwestern Av.\\
  West Lafayette, IN 47907\\ USA}
\email{mboutin@purdue.edu}

\author{Kindyl King}
\address{ Kindyl~King\\
  Purdue University\\
  Dept.\ of Mathematics\\
  150 N.\ University St.\\
  West Lafayette, IN 47907\\ USA}
\email{king487@purdue.edu}

\author{Uli Walther}
\address{ Uli~Walther\\
  Purdue University\\
  Dept.\ of Mathematics\\
  150 N.\ University St.\\
  West Lafayette, IN 47907\\ USA}
\email{walther@purdue.edu}

\thanks{This work was supported in parts by NSF grant DMS-2100288 and by Simons
  Foundation Collaboration Grant for Mathematicians \#580839. }

\date{}

\begin{abstract}
     Let $\boldx$ be a random variable with density $\rho(x)$ taking values in  $\RR^d$. We are interested in finding a representation for the shape of $\rho(x)$, i.e.~for the orbit $\{ \rho(g\cdot x) | g\in E(d) \}$ of $\rho$ under the Euclidean group.
     Let $x_1$ and $x_2$ be two random samples picked, independently, following $\rho(x)$, and let $\Delta$ be the squared Euclidean distance between  $x_1$ and $x_2$.  We show, if $\rho(x)$ is a mixture of Gaussians whose covariance matrix is the identity, and if the means of the Gaussians are in generic position, then the density $\rho(x)$ is reconstructible, up to a rigid motion in $E(d)$, from the density of $\boldDelta$. In other words, any two such Gaussian mixtures $\rho(x)$ and $\bar{\rho} (x)$ with the same distribution of distances are guaranteed to be related by a rigid motion $g\in E(d)$ as $\rho(x)=\bar{\rho} (g\cdot x)$.
We also show that a similar result holds when the distance is defined by a symmetric bilinear form. 
\end{abstract}
\maketitle

%%%%%%%%%%%%%%%%%%%%%%%%%%%%%%%%%%%%%%%%%%%%%%%%%%%%%%%%%%%%%%%%%
\section{Introduction}
%%%%%%%%%%%%%%%%%%%%%%%%%%%%%%%%%%%%%%%%%%%%%%%%%%%%%%%%%%%%%%%%%

It has been shown in previous work \cite{boutin2004reconstructing} that the shape of a generic point configuration is characterized by the distribution of its pairwise distances. More specifically, if two generic collections of $k$ points, say $p_1,p_2,\ldots, p_k \in \RR^d$ and $q_1,q_2,\ldots, q_k \in \RR^d$, have the same multiset of pairwise distances, then there exists a rigid motion $g$ in the Euclidean group $E(d)$, and a permutation $\pi$ in the symmetric group $S_k$,
such that
\[ g\cdot p_{\pi (i)} = q_i, \text{ for all }i=1,\ldots,k. \]
Thus one can compare the shape of two point configurations by comparing their respective multiset of pairwise distances.

Extending this result to the more practical case where the distance measurements are noisy is not straightforward. It is natural to assume that a small difference between two multisets of pairwise distances should correspond to a small difference in the shape of the underlying point configurations. Unfortunately, this is not true, even for \emph{generic} multisets of distances. This is because of there exist point configurations whose shape is not uniquely reconstructible from the multiset of their distances; see  \cite{boutin2004reconstructing} for examples. Indeed, these \emph{exceptional point configurations} make the problem of reconstructing a point configuration from noisy distance measurements ill-conditioned. 

To see why this is the case, take two exceptional point configurations which do have the same multiset of pairwise distances but which are not transformed into one another by any element of the Euclidean group.  Now choose any two generic point configurations in the vicinity of these exceptional configurations, respectively. Those two generic point configurations have similar, but different, pairwise distances. Yet, by construction, their shapes are very different.  Adding a small amount of noise to them will perturb their pairwise distances slightly. Thus the multiset of their pairwise distances will remain similar, even though the shape of their respective underlying  point configurations is still very different. Attempting to reconstruct the point configuration from the noisy distances, one would be hard pressed to figure out which original point configuration is the "correct" one.

This paper explores a new representation for the shape of a point configuration which is observed under noisy conditions. More specifically, the noisy point measurements are viewed as random samples following a Gaussian mixture probability model. The 
means of the Gaussians in the mixture represent the true location of the points in the configuration. The weight of each Gaussian represents the probability that a point will be observed, thus allowing for the possibility of point omission. Finally, the covariance matrix of Gaussian $i$ in the mixture represents the noise in the measurement of point $i$ in the configuration.

Our proposed representation is the distribution of the squared distance between two samples drawn, independently, following the given Gaussian mixture. In the sequel, we show that generic Gaussian mixtures are uniquely determined, up to a rigid motion, by their distribution of squared distances.

%%%%%%%%%%%%%%%%%%%%%%%%%%%%%%%%%%%%%%%%%%%%%%%%%%%%%%%%%%%%%%%%%
\section{Problem Statement}
%%%%%%%%%%%%%%%%%%%%%%%%%%%%%%%%%%%%%%%%%%%%%%%%%%%%%%%%%%%%%%%%%

Let $\boldx$ be a random variable taking values in  $\RR^d$. Denote by $\rho(x):=\rho(\boldx=x)$ be the probability density function of $\boldx$. We are interested in characterizing $\rho(x)$ modulo the action of the Euclidean group $E(d)$ on  $\RR^d$. In other words, given two densities, say $\rho(x)$ and $\bar{\rho} (x)$, we would like to determine if there exists a rigid motion  $g\in E(d)$ such that $\rho(x)=\bar{\rho} (g\cdot x)$.

Let $x_1$ and $x_2$ be two random samples in  $\RR^d$ picked independently, following $\rho(x)$. Let $\Delta$ be the squared Euclidean distance between the two samples
\[
\Delta= \|x_1-x_2 \|^2. \]
Denote by $r(\Delta)$ the probability density function of the corresponding random variable $\boldDelta $,
\[
r(\Delta):=r(\boldDelta=\Delta).
\]
We call $r(\Delta)$ the {\em distribution of distances of $\rho(x)$}. We want to determine under what circumstances $r(\Delta)$ characterizes the shape of $\rho(x)$. 
\begin{prb}
When is it the case that  two densities $\rho(x)$ and $\bar{\rho} (x)$ having the same distribution of distances are guaranteed to be related by a rigid motion $g\in E(d)$ as $\rho(x)=\bar{\rho} (g\cdot x)$?
\end{prb}

%%%%%%%%%%%%%%%%%%%%%%%%%%%%%%%%%%%%%%%%%%%%%%%%%%%%%%%%%%%%%%%%%%%%
\section{The Case of a Mixture of Equally Weighted Gaussians}
%%%%%%%%%%%%%%%%%%%%%%%%%%%%%%%%%%%%%%%%%%%%%%%%%%%%%%%%%%%%%%%%%%%%

\begin{theorem}\label{thm-eq-weight}
Let %$k\leq 3$ or 
$k \geq d+2$. 
Assume that $\rho(x)$ is a mixture of $k$ equally weighted standard normal distributions: 
\[\rho(x) = \sum_{i=1}^k \frac{1}{k} \rho_i(x) =
\sum_{i=1}^k \frac{1}{ k \sqrt{(2\pi)^d }}\, e^{-\frac{1}{2}\| x-\mu_i\|^2 }.\]
Assume that the point configuration formed by their means $\mu_1,\mu_2,\ldots, \mu_k$ is reconstructible from its multiset of pairwise distances. 

Then  $\rho(x)$  is uniquely reconstructible, up to a rigid motion, from its distribution of distances $r(\Delta)$.
\label{thm:equalweightstandardnormals}
\end{theorem}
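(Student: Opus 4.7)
The plan is to extract from $r(\Delta)$, via its Laplace transform, the multiset of squared pairwise distances $t_{ij}:=\|\mu_i-\mu_j\|^2$ among the means. Once this multiset is recovered, the hypothesis that the configuration $\{\mu_1,\ldots,\mu_k\}$ is reconstructible from its pairwise distances delivers the means up to a rigid motion, and since the mixture weights and covariances are prescribed and identical, this determines $\rho(x)$ up to a rigid motion in $E(d)$.

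To compute the Laplace transform $\hat r(s):=\int_0^{\infty}e^{-s\Delta}\,r(\Delta)\,d\Delta$, I would condition on the components that produce $x_1$ and $x_2$. Each ordered pair $(i,j)\in\{1,\ldots,k\}^2$ is selected with probability $1/k^2$, and in that case $x_1-x_2\sim N(\mu_i-\mu_j,\,2I)$, so $\Delta/2$ is non-central chi-squared with $d$ degrees of freedom and non-centrality parameter $t_{ij}/2$. Using the standard moment generating function of the non-central chi-squared, one obtains
\[
\hat r(s)\;=\;\frac{1}{k^2(1+4s)^{d/2}}\sum_{i,j=1}^k \exp\!\left(-\frac{s\,t_{ij}}{1+4s}\right).
\]

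The key step is then to recognize that the substitution $u=s/(1+4s)$, a bijection from $[0,\infty)$ onto $[0,1/4)$, transforms $(1+4s)^{d/2}\,\hat r(s)$ into $\frac{1}{k^2}\sum_{i,j}e^{-u\,t_{ij}}$, which is precisely the Laplace transform in the variable $u$ of the finite atomic measure $\frac{1}{k^2}\sum_{i,j}\delta_{t_{ij}}$ on $[0,\infty)$. By uniqueness of the Laplace transform for finite positive measures, this identifies the multiset $\{t_{ij}\}_{1\le i,j\le k}$. Discarding the $k$ trivial entries $t_{ii}=0$ and halving the multiplicities coming from the symmetry $t_{ij}=t_{ji}$ recovers the multiset of squared pairwise distances among $\mu_1,\ldots,\mu_k$, to which the reconstructibility hypothesis applies; this completes the reconstruction of $\rho$ up to rigid motion.

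I expect the chief difficulty to lie in identifying the Laplace transform formula above and recognizing that the substitution $u=s/(1+4s)$ decouples the chi-squared normalization $(1+4s)^{-d/2}$ from the non-centrality term, reducing the problem to classical uniqueness of Laplace transforms for a bounded atomic measure. Once that reduction is isolated, the remainder is a direct appeal to the reconstructibility result from \cite{boutin2004reconstructing}, and no subtle analytic issue arises since the measure $\frac{1}{k^2}\sum_{i,j}\delta_{t_{ij}}$ is supported on a finite set.
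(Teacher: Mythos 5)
Your proposal is correct, and its overall strategy coincides with the paper's: compute the integral transform of $\boldDelta$, decompose it over ordered pairs of mixture components, use the fact that $\|x_1-x_2\|^2$ is (twice) a noncentral chi-squared variable to obtain the closed form $(1+4s)^{-d/2}\exp\bigl(-s\,t_{ij}/(1+4s)\bigr)$, reduce to recovering the multiset $\{t_{ij}\}$, and then invoke the reconstructibility result of \cite{boutin2004reconstructing}. Where you genuinely differ is the extraction step. The paper expands $M(t)$ as a series in the functions $t^n/(1-4t)^{n+1/2}$, reads off the power sums $\sum_{i<j}t_{ij}^{\,n}$ as coefficients, and recovers the multiset from the power sums (the mechanism that becomes Lemma \ref{lem:weightedpowersums} in the weighted case); you instead substitute $u=s/(1+4s)$ and appeal to uniqueness of the Laplace transform of the finite atomic measure $\frac{1}{k^2}\sum_{i,j}\delta_{t_{ij}}$, i.e., to linear independence of distinct exponentials on an interval. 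Both are sound, and your route has two small advantages: it sidesteps the question of exactly how many power sums are needed (the paper's remark that the first $k$ suffice should really refer to the first $\binom{k}{2}$, since that is the size of the multiset being recovered), and your normalization $(1+4s)^{-d/2}$ is the dimensionally correct one, whereas the paper's $(1-4t)^{-1/2}$ reflects a $d=1$ normalization slip that does not affect the argument. Your handling of the diagonal atoms at $0$ and the factor of two from the symmetry $t_{ij}=t_{ji}$ is also correct.
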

\begin{proof}
The moment generating function of $\Delta$ is the expected value
\begin{align*}
    M(t):=E\left(e^{t\Delta}\right) 
        &= \int_{\RR^d} \int_{\RR^d} e^{t\Delta}\rho(x_1)\rho(x_2) dx_1 dx_2,\\
    &= \frac{1}{k^2} \sum_{i,j=1}^k
       \int_{\RR^d} \int_{\RR^d} e^{t\Delta}\rho_i(x_1)\rho_j(x_2) dx_1 dx_2, \\
    &= \frac{1}{k^2} \sum_{i,j=1}^k
       \int_{\RR^d} \int_{\RR^d} e^{t\| x_1-x_2\| ^2} \frac{1}{\sqrt{2\pi}}\, e^{\frac{-\| x_1-\mu_i\| ^2}{2}}                
            \frac{1}{\sqrt{2\pi}}\, e^{\frac{-\| x_2-\mu_j\| ^2}{2}}dx_1 dx_2, \\
    &=: \frac{1}{k^2} \sum_{i,j=1}^k M_{ij}(t).
\end{align*}
The function $M_{ij}(t)$ is the moment generating function of the squared distance $\norm{x_1-x_2}^2$ when $x_1$ is sampled from $\rho_i(x)$ and $x_2$ is sampled from $\rho_j(x)$. Integrating, we find
\[M_{ij}(t) = \frac{1}{\sqrt{1-4t}}\, e^{\frac{t\| \mu_i-\mu_j\| ^2}{1-4t}}.\]
Expanding the exponential, we obtain 
\[M_{ij}(t) = \sum_{n=0}^\infty \frac{\| \mu_i-\mu_j\|^{2n}}{n!} \frac{t^n}{(1-4t)^{n+\frac{1}{2}}}.\]
Therefore, 
\[ M(t)=  \frac{1}{k^2} \sum_{n=0}^\infty \sum_{i,j=1}^k \frac{\| \mu_i-\mu_j\|^{2n}}{n!} \frac{t^n}{(1-4t)^{n+\frac{1}{2}}}. \]
Let us focus on the coefficient $m_n$ of $ \frac{t^n}{(1-4t)^{n+\frac{1}{2}}}$ in the expansion of $M(t)$:
\[
m_n:=\frac{1}{k^2} \sum_{i,j=1}^k \frac{\| \mu_i-\mu_j\|^{2n}}{n!} =  
\frac{2}{k^2 n!} \sum_{1\le i<j\le k} \| \mu_i-\mu_j\|^{2n}.
\]
Knowing  $r(\Delta)$ determines  $M(t)$ and therefore also $m_n$, for every $n=0,1,2,\ldots, \infty$. Thus, $r(\Delta)$ determines the value of the power sums of the squared distances
\[
\ \sum_{i\neq j=1}^k \left( \| \mu_i-\mu_j\|^{2} \right)^n, \text{ for }  n=0,1,2,\ldots, \infty. \]
Knowledge of the first $k$ power sums is sufficient to recover the values of the (unlabeled) squared distances $\| \mu_i-\mu_j\|^2$. Since $k \geq d+2$, and since the point configuration formed by the $\mu_i$'s is generic, Theorem 2.6 in \cite{boutin2004reconstructing} implies that one can recover the coordinates of the (unlabeled) means $\mu_1,\ldots, \mu_k$, up to a rigid motion of the point configuration. The conclusion follows from the fact that the Gaussian mixture is uniquely determined by the values of the $\mu_i$'s.
\end{proof}

\begin{rmk}
Theorem \ref{thm-eq-weight} also holds, without any assumption on the positions of the means of the Gaussian, for $k=2$ and $k=3$ (where all configurations are reconstructible), and in the trivial case $k=1$.

\end{rmk}

%%%%%%%%%%%%%%%%%%%%%%%%%%%%%%%%%%%%%%%%%%%%%%%%%%%%%%%%%%%%%%%%%%
\section{The Case of a Mixture of Gaussians}
%%%%%%%%%%%%%%%%%%%%%%%%%%%%%%%%%%%%%%%%%%%%%%%%%%%%%%%%%%%%%%%%%%%

\begin{theorem}
Let %$k\leq 3$ or 
$k \geq d+2$. 
Assume that $\rho(x)$ is a mixture of $k$ normal distributions: 
\[\rho(x) = \sum_{i=1}^k  \pi_i \rho_i(x) =
\sum_{i=1}^k \frac{\pi_i}{  \sqrt{(2\pi)^d }}\, e^{-\frac{1}{2}\| x-\mu_i\|^2 },\]
where  $\pi_i >0$ for all $i=1,\dots, k$ and $\sum_{i=1}^k \pi_i = 1$. 
Assume that the means $\mu_1,\mu_2,\ldots, \mu_k$ have pairwise distinct distances $\|\mu_i-\mu_j\|$, and that the configuration of their means can be uniquely reconstructed, up to a rigid motion, from the multiset of their pairwise distances.

Then  $\rho(x)$  is uniquely reconstructible, up to a rigid motion, from its distribution of distances $r(\Delta)$. 
\end{theorem}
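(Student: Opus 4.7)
The plan is to adapt the proof of Theorem \ref{thm-eq-weight} so as to recover both the positions $\mu_i$ and the weights $\pi_i$. Repeating the moment generating function calculation with general weights yields
\[
M(t) = \sum_{i,j=1}^{k} \pi_i\pi_j\, M_{ij}(t) = \sum_{n=0}^\infty \frac{t^n}{(1-4t)^{n+\frac{1}{2}}} \cdot \frac{1}{n!}\sum_{i,j=1}^k \pi_i\pi_j \,\|\mu_i-\mu_j\|^{2n},
\]
so $r(\Delta)$ determines the weighted power sums $s_n := \sum_{1\le i<j\le k} \pi_i\pi_j\, \|\mu_i-\mu_j\|^{2n}$ for every $n\ge 1$. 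Writing $D_{ij} := \|\mu_i-\mu_j\|^2$ and $w_{ij} := \pi_i\pi_j$, this says that $r(\Delta)$ determines all moments of the finite, compactly supported measure
\[
\nu = \sum_{1\le i<j\le k} w_{ij}\, \delta_{D_{ij}}
\]
on $\RR_{\ge 0}$.

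The crucial step, and the main new ingredient beyond the equal-weight proof, is to separate the distances from the weights in $\nu$. Because $\nu$ is compactly supported, its moment sequence determines it uniquely (Hausdorff moment problem); alternatively, one may read off the atoms and masses as poles and residues of the rational function $\sum_{n\ge 1} s_n z^n = \sum_{i<j} w_{ij} D_{ij} z/(1 - D_{ij} z)$. Since the $D_{ij}$ are pairwise distinct by hypothesis, the atoms of $\nu$ are exactly the $\binom{k}{2}$ values $D_{ij}$ with respective masses $w_{ij}$, so the unordered multiset $\{(D_{ij}, w_{ij})\}_{1\le i<j\le k}$ is reconstructible from $r(\Delta)$.

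Projecting onto the first coordinate, the multiset $\{D_{ij}\}$ of squared pairwise distances is known. The hypothesis on the configuration of means, combined with $k \ge d+2$ and Theorem 2.6 of \cite{boutin2004reconstructing}, then recovers $\mu_1,\ldots,\mu_k$ up to a rigid motion. Fixing any such reconstruction assigns a labeling to the means and hence to each index pair $(i,j)$; because all $D_{ij}$ are distinct, each labeled $D_{ij}$ is matched to a unique $w_{ij}$ in the multiset recovered above.

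Finally, I recover the individual $\pi_i$ from the pairwise products $w_{ij}=\pi_i\pi_j$. Since $k \ge d+2 \ge 3$, for each index $i$ one can pick two other indices $j,\ell$ and form
\[
\pi_i^2 = \frac{w_{ij}\, w_{i\ell}}{w_{j\ell}},
\]
then take the positive square root (justified since $\pi_i>0$). Together with the recovered $\mu_i$'s this reconstitutes the full Gaussian mixture up to a rigid motion. The main obstacle is the separation step: in the equal-weight case the moment sequence directly yields the power sums of the distances, whereas here the distances and their weights are entangled inside $\nu$, and the distinct-distances assumption is precisely what allows them to be disentangled via the moment problem.
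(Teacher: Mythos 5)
Your proof is correct, and its overall skeleton is the same as the paper's: compute the moment generating function of $\Delta$, expand it in the basis $t^n/(1-4t)^{n+1/2}$ to extract the weighted power sums $\sum_{i<j}\pi_i\pi_j\|\mu_i-\mu_j\|^{2n}$, disentangle these into the multiset of pairs $(\|\mu_i-\mu_j\|^2,\pi_i\pi_j)$ using the distinct-distances hypothesis, recover the means via \cite{boutin2004reconstructing}, and finally recover the individual $\pi_i$ from their pairwise products using positivity. The one genuine difference is how you justify the disentangling step. The paper isolates this as a self-contained result (Lemma~\ref{lem:weightedpowersums}) and proves it by a Prony-type linear-algebra argument: the Hankel matrix of power sums factors as $\Xi A \Xi^T$ with $\Xi$ Vandermonde and $A$ the diagonal matrix of weights, which yields the coefficients of $\prod_i(t-x_i)$ and then the weights by solving a Vandermonde system. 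This shows that the first $2\binom{k}{2}$ weighted power sums already suffice, and it works verbatim for signed nonzero weights. You instead package the power sums as the moments of the atomic measure $\nu=\sum_{i<j}\pi_i\pi_j\,\delta_{\|\mu_i-\mu_j\|^2}$ and invoke determinacy of compactly supported measures by their moments (or, equivalently, read off poles and residues of the rational generating function $\sum_n s_n z^n$). This is shorter and conceptually clean, but it uses the full moment sequence and, in the positive-measure phrasing, leans on $\pi_i\pi_j>0$; also note that a pair with $\|\mu_i-\mu_j\|=0$ would be invisible to the pole-and-residue version (recoverable from $s_0$, and excluded anyway if the means are distinct), whereas the paper's lemma only needs the $x_i$ pairwise distinct. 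Your explicit formula $\pi_i^2=w_{ij}w_{i\ell}/w_{j\ell}$ for extracting the weights is a nice concrete instantiation of the paper's remark that the products determine the $\pi_i$ up to a common sign.
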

\begin{proof}
The moment generating function of $\Delta$ is 
\begin{align*}
    E\left(e^{t\Delta}\right) 
        &= \int_{\RR^d} \int_{\RR^d} e^{t\Delta}\rho(x_1)\rho(x_2) dx_1 dx_2,\\
    &=  \sum_{i,j=1}^k \pi_i \pi_j
       \int_{\RR^d} \int_{\RR^d} e^{t\Delta}\rho_i(x_1)\rho_j(x_2) dx_1 dx_2, \\
    &=  \sum_{i,j=1}^k \pi_i \pi_j
       \int_{\RR^d} \int_{\RR^d} e^{t\| x_1-x_2\| ^2} \frac{1}{\sqrt{2\pi}} e^{\frac{-\| x_1-\mu_i\| ^2}{2}}                
            \frac{1}{\sqrt{2\pi}} e^{\frac{-\| x_2-\mu_j\| ^2}{2}}dx_1 dx_2, \\
    &=:  \sum_{i,j=1}^k \pi_i \pi_j M_{ij}(t).
\end{align*}Again, the function $M_{ij}(t)$ is the moment generating function of the squared distance $\norm{x_1-x_2}^2$ when $x_1$ is sampled from $\rho_i(x)$ and $x_2$ is sampled from $\rho_j(x)$. 
From the proof of Theorem \ref{thm:equalweightstandardnormals}, we have
\[M_{ij}(t) = \sum_{n=0}^\infty \frac{\| \mu_i-\mu_j\|^{2n}}{n!} \frac{t^n}{(1-4t)^{n+\frac{1}{2}}}.\]
Therefore, 
\[ M(t)=  \sum_{n=0}^\infty \sum_{i,j=1}^k \pi_i \pi_j \frac{\| \mu_i-\mu_j\|^{2n}}{n!} \frac{t^n}{(1-4t)^{n+\frac{1}{2}}}. \]
Let us focus on the coefficient $m_{\pi,n}$ of $ \frac{t^n}{(1-4t)^{n+\frac{1}{2}}}$ in the expansion of $M(t)$:
\[ m_{\pi,n}:=\sum_{i,j=1}^k \pi_i \pi_j \frac{\| \mu_i-\mu_j\|^{2n}}{n!} = 
\frac{2}{ n!} \sum_{1\le i<j\le k} \pi_i \pi_j \| \mu_i-\mu_j\|^{2n}
\]
Since  $r(\Delta)$ determines $M(t)$, is also determines $m_{\pi,n}$, for every $n=0,\ldots, \infty$. Thus it also determines the value of the weighted power sums of the squared distances
\[ \sum_{1\le i<j\le k} \pi_i \pi_j \left( \| \mu_i-\mu_j\|^{2} \right)^n, \text{ for }  n=0,1,2,\ldots, \infty. \]
By Lemma \ref{lem:weightedpowersums} below, using the hypothesis that the point configuration formed by the $\mu_i$'s does not contain repeated distances, one can recover the values of the (unlabeled) pairs $\{ (\pi_i \pi_j,\| \mu_i-\mu_j\|^2)\}_{i\neq j, i,j=1}^k$.
Since the $\pi_i$'s are non-zero, we can use the values of the products $\pi_i \pi_j$ to recover the values of the $\pi_i$'s up to a common sign factor, while keeping track of which $\pi_i$ and $\pi_j$ is associated to which $ \| \mu_i-\mu_j\|^2$. As every $\pi_i$ must be positive, one may in fact recover the $\pi_i$ from the multiset of their pairwise products.

According to \cite{boutin2004reconstructing}, assuming that the point configuration formed by the $\mu_i$'s is generic, one can recover the coordinates of the (unlabeled) means $\mu_1,\ldots, \mu_k$, up to a rigid motion of the point configuration. Picking an ordering for (a specific incarnation modulo rigid motions of) the means, %If $k=1$, then the single mean $\mu_1$ is associated to the correct $\pi_1=1$. If $k>1$, the association of the correct $\pi_i$ to the correct $\mu_i$ is recovered by comparing the pairs $(\pi_i, \pi_j)$ and $(\pi_i, \pi_{j'})$ associated to the distances  $ \| \mu_i-\mu_j \|^2$ and  $ \| \mu_i-\mu_k \|^2$ and picking the value that is repeated. 
The conclusion follows from the fact that a Gaussian mixture is uniquely determined by the weights $\pi_i$ and the mean $\mu_i$ for each Gaussian component.
\end{proof}
Note that the  result also holds, without any assumption on the positions of the means of the Gaussian, for the trivial case of $k=1$, as well as for $k=2$ and $k=3$.

\begin{lemma}
Let $a_1,a_2,\ldots, a_k \in \RR$ be non-zero real numbers.  Let $x_1,x_2, \ldots, x_k \in \RR$ be pairwise distinct real numbers. Then given the values of the weighted power sums
\begin{eqnarray}
\label{eqn-pn}
 p_n&=&\sum_{i=1}^k a_i x_i^n, \text { for } n=0,1,2,\ldots, 2k-1,
 \end{eqnarray}
one can recover the values of the (unlabeled) pairs $(a_i,x_i)$. 
\label{lem:weightedpowersums}
\end{lemma}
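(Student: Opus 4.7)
The plan is to use the classical Prony method. I would form the Hankel matrix $H \in \RR^{k \times k}$ with $(i,j)$-entry $H_{ij} := p_{i+j-2}$, built from the given moments $p_0, p_1, \ldots, p_{2k-2}$. A direct computation gives the factorization $H = V^T D V$, where $V_{ij} = x_i^{j-1}$ is the Vandermonde matrix in the $x_i$'s and $D = \mathrm{diag}(a_1,\ldots, a_k)$. Because the $x_i$ are pairwise distinct, $V$ is invertible, and because every $a_i$ is nonzero, $D$ is invertible. Hence $H$ is invertible: this is the one step where both hypotheses of the lemma are essential, and everything that follows is standard linear algebra.

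Next, I would exploit the annihilating polynomial $P(z) := \prod_{i=1}^k (z - x_i) = z^k + c_{k-1} z^{k-1} + \cdots + c_0$. Since $P(x_i) = 0$, multiplying by $a_i x_i^n$ and summing over $i$ yields the linear recurrence $p_{n+k} + c_{k-1} p_{n+k-1} + \cdots + c_0 p_n = 0$ for every $n \geq 0$. Writing this for $n = 0, 1, \ldots, k-1$ produces the linear system $H \cdot (c_0, c_1, \ldots, c_{k-1})^T = -(p_k, p_{k+1}, \ldots, p_{2k-1})^T$, which uses precisely the data $p_0, \ldots, p_{2k-1}$ granted by hypothesis. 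Invertibility of $H$ uniquely determines the coefficients $c_0, \ldots, c_{k-1}$ of $P(z)$, and the unlabeled multiset $\{x_1, \ldots, x_k\}$ is recovered as the set of roots of $P(z)$.

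Finally, with the $x_i$'s in hand, I would solve the Vandermonde system $\sum_{i=1}^k a_i x_i^n = p_n$ for $n = 0, \ldots, k-1$ to recover each $a_i$ together with its pairing to the corresponding $x_i$. Since the $x_i$ are distinct, this Vandermonde matrix is invertible and the solution is unique. The main obstacle throughout is verifying the invertibility of the Hankel matrix $H$; once the factorization $H = V^T D V$ is in hand, the rest is routine, and both hypotheses of the lemma — pairwise distinct $x_i$ and nonzero $a_i$ — enter precisely at this single step.
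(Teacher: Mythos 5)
Your proposal is correct and follows essentially the same route as the paper: the annihilating polynomial $\prod_i(t-x_i)$ yields a linear recurrence on the $p_n$, the resulting Hankel matrix is shown invertible via the Vandermonde--diagonal--Vandermonde factorization (exactly where both hypotheses enter), the $x_i$ are recovered as roots, and the $a_i$ follow from a final Vandermonde solve. Aside from a sign convention on the coefficients of the annihilating polynomial, this is the paper's argument.
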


\begin{proof}
Consider the polynomial \[ F(t)=\prod_{i=1}^k (t-x_i)= t^k- \sum_{r=1}^k c_r t^{k-r},\] where the $c_r$ is, up to a sign, the $r$-th elementary symmetric polynomials in the $x_i$'s. 

We have $F(x_i)=0$, for any $i=1,2,\ldots,k$. Thus we have also have $ a_ix_i^m  F(x_i)=0$, for any $i=1,2,\ldots,k$ and any $m=0,1,2, \ldots, \infty $. Summing over $i$, we get
\begin{eqnarray*}
0&=&\sum_{i=1}^k a_i x_i^m F(x_i), \\
&=&\sum_{i=1}^k a_i x_i^m \left( x_i^k- \sum_{r=1}^k c_r x_i ^{k-r} \right)
\end{eqnarray*}, and so 
\[0=p_{k+m} - \sum_{r=1}^k c_r p_{k-r+m}.
\]
Writing these equations in matrix form, for $m=0,1,2,\ldots, k-1$, we have
\[
\left( 
\begin{array}{c}
p_k\\
p_{k+1} \\
p_{k+2} \\
\vdots \\
p_{2k-1}
\end{array}
\right)
=
\left( 
\begin{array}{ccccc}
p_0& p_1 & p_2 & \ldots & p_{k-1} \\
p_{1} & p_2 & p_3 & \ldots & p_{k} \\
p_{2} & p_3 & p_4 & \ldots & p_{k+1} \\
\vdots &&&&\vdots\\
p_{k-1} & p_{k} & p_{k+1} & \ldots & p_{2k-1}
\end{array}
\right)
\left( 
\begin{array}{c}
c_k\\
c_{k-1} \\
c_{k-2} \\
\vdots \\
c_{1}
\end{array}
\right).
\]
The matrix on the right-hand-side can be factored as
\[ 
\left( 
\begin{array}{ccc}
p_0&  \ldots & p_{k-1} \\
p_{1}& \ldots & p_{k} \\
p_{2} & \ldots & p_{k+1} \\
\vdots&&\vdots \\
p_{k-1} & \ldots & p_{2k-1}
\end{array}
\right) = \underbrace{\left( 
\begin{array}{cccc}
1 & 1  & \ldots & 1 \\
x_1 & x_2  & \ldots & x_k  \\
x_1^2 & x_2^2  & \ldots & x_k^2  \\
\vdots&&&\vdots \\
x_1^{k-1} & x_2^{k-1}  & \ldots & x_k^{k-1}  \\
\end{array}
\right)}_{:=\Xi} 
\underbrace{\left( 
\begin{array}{cccc}
a_1& 0   & \ldots & 0\\
0 & a_2   & \ldots & 0 \\
\vdots &&&\vdots\\
0 & 0 &  \ldots & a_k
\end{array}
\right)}_{:=A}
\underbrace{\left( 
\begin{array}{ccccc}
1& x_1& x_1^2  & \ldots & x_1^{k-1} \\
1 & x_2 & x_2^2 & \ldots & x_2^{k-1}\\
1 & x_3 & x_3^2 & \ldots & x_3^{k-1}\\
\vdots &&&&\vdots\\
1 & x_k & x_k^2 & \ldots & x_k^{k-1}
\end{array}
\right)}_{=\Xi^T}.
\]
The matrix $A$ is invertible since the $a_i$'s a non-zero. The matrices $\Xi, \Xi^T$ are Vandermonde matrices, with determinant $\prod_{1\le i<j\le k}(x_j-x_i)$ and thus invertible when the $x_i$'s are pairwise distinct. Thus we can recover the value of the coefficients $c_r$'s from $p_0,p_1,p_2,\ldots, p_{2k-1}$. The values of the $c_r$'s uniquely prescribe the set of roots of the polynomial $F(t)$, and thus the $x_i$'s are determined as a set. Label them (arbitrarily) and then solve the following system of equation in order to recover the corresponding $a_i$'s.
\[
\left( 
\begin{array}{c}
p_0\\
p_{1} \\
\vdots \\
p_{k-1}
\end{array}
\right)=
\Xi\cdot 
\left( 
\begin{array}{c}
a_1\\
a_2 \\
\vdots \\
a_k
\end{array}
\right),
\]
compare Equation \eqref{eqn-pn}.
 Again, this is possible because  $\Xi$ is invertible when the $x_i$'s are distinct.
\end{proof}

More generally, we have
\begin{theorem}
Let $V=\RR^d$ be equipped with a non-degenerate symmetric bilinear form $\bilin{-,-}$. Let $\boldx $ be a random variable taking values in $V$ following a probability distribution 
\[ \rho(x)=  \sum_{i=1}^k  \pi_i \rho_i(x) =
\sum_{i=1}^k \frac{\pi_i}{  \sqrt{(2\pi)^d }}\, e^{-\frac{1}{2} \bilin{ x-\mu_i,x-\mu_i} },\]
where  $\pi_i >0$ for all $i=1,\dots, k$ and $\sum_{i=1}^k \pi_i = 1$.
Pick two samples $x_1$, $x_2$, independently, following $\rho(x)$. Let $\Delta = \bilin{x_1-x_2, x_1-x_2}$. Denote by $r(\Delta)$ the probability distribution of the random variable ${\bf \Delta}$ thus defined.

If %$k\leq 3$ or 
$k \geq d+2$, and if  the means $\mu_1,\mu_2,\ldots, \mu_k$ are generic, 
then $\rho(x)$ is uniquely reconstructible from $r(\Delta)$, up to a transformation $g\in O(V)$, the symmetry group of the bilinear form  $\bilin{-,-}$.
\end{theorem}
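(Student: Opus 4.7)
The plan is to mirror the proof of the preceding theorem, replacing the squared Euclidean norm $\|\mu_i-\mu_j\|^2$ throughout by the quadratic form $Q(v):=\bilin{v,v}$ associated to the given bilinear form. Three ingredients need to be adapted: (i) the moment generating function computation for a single pair $M_{ij}(t)$, (ii) the recovery of the unlabeled pairs $(\pi_i\pi_j, Q(\mu_i-\mu_j))$ from the moments of $r(\Delta)$, and (iii) the reconstruction of the mean configuration up to the isometry group of $\bilin{-,-}$ from the multiset of values $Q(\mu_i-\mu_j)$.

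For (i), I would diagonalize $\bilin{-,-}$ to its canonical form with signature $(p,q)$ by an orthogonal change of coordinates (for the standard Euclidean structure used to define Lebesgue measure). This simultaneously diagonalizes the exponents of $\rho_i,\rho_j$ and the quadratic form defining $\Delta$, so the integral $M_{ij}(t)$ factors as a product of $d$ one-dimensional Gaussian integrals, each evaluated by the calculation already performed in the proof of Theorem \ref{thm:equalweightstandardnormals}. In the indefinite-signature case the divergent one-dimensional factors are interpreted by analytic continuation in $t$, or equivalently one reads $M(t)$ as a formal power series. Multiplying the factors gives, formally,
\[
M_{ij}(t) = \sum_{n=0}^\infty \frac{Q(\mu_i-\mu_j)^n}{n!}\, \frac{t^n}{(1-4t)^{n+d/2}},
\]
and summing with weights $\pi_i\pi_j$ shows that the coefficient of $t^n/(1-4t)^{n+d/2}$ in $M(t)$ is, up to a factor of $n!$, the weighted power sum $\sum_{i<j} \pi_i\pi_j\, Q(\mu_i-\mu_j)^n$, which is therefore determined by $r(\Delta)$ for every $n$.

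For (ii), I would apply Lemma \ref{lem:weightedpowersums} verbatim with the values $Q(\mu_i-\mu_j)$ playing the role of the $x_i$ and the products $\pi_i\pi_j$ playing the role of the $a_i$. Genericity of the means makes the values $Q(\mu_i-\mu_j)$ pairwise distinct, so the lemma recovers the multiset $\{(\pi_i\pi_j,\,Q(\mu_i-\mu_j))\}_{1\le i<j\le k}$. Positivity of the $\pi_i$ then separates the individual weights from their pairwise products, exactly as in the previous proof.

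The main obstacle is (iii), which requires an analog of the reconstructibility result of \cite{boutin2004reconstructing} for general non-degenerate symmetric bilinear forms, and it splits into two sub-claims. First, for a \emph{labeled} family of values $Q(\mu_i-\mu_j)$, the mean configuration is determined up to the isometry group of $\bilin{-,-}$: translating so that $\mu_1=0$ lets one reconstruct the Gram matrix $G_{ij}=\bilin{\mu_i,\mu_j}$ via the polarization identity $\bilin{\mu_i,\mu_j}=\tfrac{1}{2}(Q(\mu_i)+Q(\mu_j)-Q(\mu_i-\mu_j))$, and a standard linear-algebra argument then shows that a point configuration in $V$ is determined up to $O(V)$ by its Gram matrix. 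Second, the labeling ambiguity, i.e.\ the fact that only the multiset of $Q$-values is available, is resolved by a genericity argument parallel to the one in \cite{boutin2004reconstructing}: for $k\ge d+2$ a generic mean configuration admits no non-trivial relabeling producing the same multiset of $Q$-values, so the labels, and hence the means up to $O(V)$, can be recovered. Combined with the recovered weights $\pi_i$, this determines $\rho(x)$ up to the isometry group of $\bilin{-,-}$.
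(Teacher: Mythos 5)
Your proposal takes exactly the same route as the paper: the paper's entire proof of this theorem is the single sentence that it is ``the same as for the previous theorem,'' and your three steps are precisely that earlier argument with $\|\mu_i-\mu_j\|^2$ replaced by $Q(\mu_i-\mu_j)=\bilin{\mu_i-\mu_j,\mu_i-\mu_j}$. If anything you supply strictly more detail than the source, since you correctly isolate the two points where the generalization is not literally verbatim --- the interpretation of the Gaussian integral when the form is indefinite, and the need for an $O(V)$-analogue of the reconstruction theorem of \cite{boutin2004reconstructing} via Gram matrices and a genericity argument --- both of which the paper leaves implicit.
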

\begin{proof}
The proof is the same as for the previous theorem.
\end{proof}

Again, the  result also holds, without any assumption on the positions of the means of the Gaussian, for the trivial case of $k=1$, as well as for $k=2$ and $k=3$.

\bibliographystyle{unsrt}
\bibliography{bibio}
\end{document}